\def\R{{\mathbb R}}
\def\C{{\mathbb C}}
\def\d{{\mathrm d}}
\begin{document}

\title{A high-order exponential integrator for nonlinear parabolic equations with nonsmooth initial data}


\author{Buyang Li  and Shu Ma
}


\institute{Buyang Li and Shu Ma \at
              Department of Applied Mathematics, The Hong Kong Polytechnic University, Hong Kong.\\
\email{buyang.li@polyu.edu.hk and maisie.ma@connect.polyu.hk}\\
This work is partially supported by the Hong Kong Research Grants Council (GRF project No. 15300519) and an internal grant of the university (project code ZZKK).
}

\date{}

\maketitle

\begin{abstract}
A variable stepsize exponential multistep integrator, with contour integral approximation of the operator-valued exponential functions, is proposed for solving semilinear parabolic equations with nonsmooth initial data. 
By this approach, the exponential $k$-step method would have $k^{\rm th}$-order convergence in approximating a mild solution, possibly nonsmooth at the initial time. In consistency with the theoretical analysis, a numerical example shows that the method can achieve high-order convergence in the maximum norm for semilinear parabolic equations with discontinuous initial data.\\

\noindent{\bf Key words}\,
nonlinear parabolic equation, nonsmooth initial data, exponential integrator, variable stepsize, high-order accuracy, discontinuous initial data.
\end{abstract}

\section{\bf Introduction}\label{Se:intr}

Let $A$ be the generator of a bounded analytic semigroup on a Banach space $X$, with domain $D(A)\subset X$, and consider the abstract semilinear initial-value problem
\begin{equation}
\label{ivp}
\left \{
\begin{aligned}
u'(t) - Au(t)&=f(t,u(t))  \quad\mbox{for}\,\,\, t\in(0,T] ,\\
u(0)&=u_0 , 
\end{aligned}
\right .
\end{equation}
where $u_0\in X$ and $f:[0,\infty)\times X\rightarrow X$ is a smooth (locally Lipschitz continuous) function. 
A function $u\in C([0,T];X)$ is called a {\it mild solution} of \eqref{ivp} if it satisfies the integral equation 
\begin{equation}
\label{mild}
u(t)=e^{tA}u_0+\int_0^t e^{(t-s)A}f(s,u(s))\d s ,\quad \forall\, t\in(0,T],  
\end{equation}
where $e^{tA}$ denotes the semigroup generated by the operator $A$.

In the linear case $f(t,u)\equiv f(t)$, time discretization of \eqref{ivp} by a $k^{\rm th}$-order Runge--Kutta method satisfies the following error estimate: 
\begin{equation}
\label{p-error}
\|u_n-u(t_n)\| \le C\tau^k t_n^{-k}  
\quad\mbox{for}\,\,\, u_0\in X ,
\end{equation}
where $\tau$ denotes the stepsize of time discretization; see \cite{Lubich-Ostermann-1996,Thomee2006}.  
In particular, for a nonsmooth initial value $u_0\in X$, the methods have $k^{\rm th}$-order accuracy when $t_n$ is not close to zero. 
This result also holds for implicit backward difference formulae (BDF), exponential integrators \cite{Hochbruck-Ostermann-2010,Koskela-Ostermann-2013}, and fractional-order evolution equations \cite[Remark 2.6]{Jin-Li-Zhou-2017}. 

However, such high-order convergence as \eqref{p-error} does not hold when the source function $f(t,u)$ is nonlinear with respect to $u$. A counter example constructed in \cite{Crouzeix-Thomee-1987} shows that a $k^{\rm th}$-order Runge--Kutta method normally has only first-order convergence for a general nonsmooth initial data $u_0\in X$, i.e.,  
\begin{equation}
\label{1-error}
C_1\tau t_n^{-1} \le \|u_n-u(t_n)\| \le C_2\tau t_n^{-1}  
\quad\mbox{for}\,\,\, u_0\in X .
\end{equation}
Similarly, semi-implicit Runge--Kutta methods also suffer from this barrier of convergence rate \cite{Ostermann-Thalhammer-2000}. 
For nonlinear problems with nonsmooth initial data, existing error estimates for exponential integrators also yield only first-order convergence (see \cite{Hochbruck-Ostermann-2005,Mukam-Tambue-2018,Ostermann-Thalhammer-2000})
\begin{equation}
\label{1-error-exp}
\|u_n-u(t_n)\| \le C\tau 
\quad\mbox{for}\,\,\, u_0\in X . 
\end{equation}
No method has been proved to have high-order convergence for semilinear parabolic equations with general nonsmooth initial data $u_0\in X$.

Of course, if the initial value is sufficiently smooth and satisfies certain compatibility conditions, e.g., $u_0\in D(A^k)$, then $O(\tau^k)$ convergence can be achieved uniformly for $t_n\in[0,T]$ for the nonlinear problem \eqref{ivp}. 
This has been proved for most time discretization methods, including Runge--Kutta methods \cite{Crouzeix-Thomee-1987},  
implicit A$(\alpha)$-stable multistep methods \cite{Lubich-1990}, implicit--explicit BDF methods \cite{Akrivis-2015,Akrivis-Lubich-2015}, splitting methods \cite{Einkemmer-Ostermann-2015,Einkemmer-Ostermann-2016,Hansen-Kramer-Ostermann-2012}
and several types of exponential integrators \cite{Calvo-Palencia-2006,Hochbruck-Ostermann-2005,Hochbruck-Ostermann-2010,Ostermann-Thalhammer-Wright-2006}. 
Extension to quasi-linear parabolic problems has also been done; see  
\cite{Gonzalez-Thalhammer-2007,Gonzalez-Thalhammer-2015,Gonzalez-Thalhammer-2016,Hochbruck-Ostermann-2011,Hochbruck-Ostermann-Schweitzer-2009,Lubich-Ostermann-1995}. 
The error estimates presented in these articles do not apply to nonsmooth initial data. 

Due to the presence of the factor $t_n^{-1}$, the first-order convergence of Runge--Kutta methods cannot be improved by using variable stepsizes. However, compared with other time discretization methods, exponential integrators were proved to have an error bound of $O(\tau)$ uniformly for $t_n\in [0,T]$ for nonsmooth initial data $u_0\in X$, without the factor $t_n^{-1}$ appearing in the error estimates for other methods (see \cite{Hochbruck-Ostermann-2005,Mukam-Tambue-2018,Ostermann-Thalhammer-2000}). This uniform convergence 
motivates us to consider the possibility of constructing high-order exponential integrators with variable stepsizes. 

In this paper, we propose a variable stepsize exponential $k$-step integrator for \eqref{ivp} with general nonsmooth initial data $u_0\in X$, by choosing 
\begin{align}\label{stepsize}
\tau_n=O((t_n/T)^\beta \tau) ,\quad\mbox{for some}\,\,\,\beta> 1-\frac1k,
\end{align}
where $\tau_n=t_n-t_{n-1}$ denotes the $n^{\rm th}$ stepsize in the partition $0=t_0<t_1<\dots<t_N=T$, and $\tau$ the maximal stepsize.  
For the convenience of implementation, we also integrate in the numerical method (and the error analysis) an algorithm for approximating the exponential integrator by using the contour integral techniques developed in \cite{Lopez-Fernandez-2010,LFPS-2006,Schadle-LF-Lubich-2006}. 

The proposed variable stepsize method, with contour integral approximation of the exponential integrator, can achieve $k^{\rm th}$-order accuracy in approximating a mild solution of \eqref{ivp}, i.e., 
\begin{align} \label{conv-exp-int-k}
\max_{1\le n\le N}
\|u_n-u(t_n)\| 
\le C\tau^k . 
\end{align}
This is the first high-order convergence result in approximating semilinear parabolic equations with nonsmooth initial data (without any regularity in addition to $u_0\in X$).  
In view of the result \eqref{1-error} for the Runge--Kutta methods, the convergence result \eqref{conv-exp-int-k} shows the superiority of the variable stepsize exponential integrator for problems with nonsmooth initial data. 
The approximation of exponential integrator would require $O(\ln(\tau^{-1}))$ parallel solutions of linear equations, and there are $N=O(\tau^{-1})$ time levels by using the stepsize in \eqref{stepsize}. Therefore, the total computational cost is $O(\tau^{-1}\ln(\tau^{-1}))$ for an accuracy of $O(\tau^k)$. 

For rigorous analysis without extra regularity assumptions on the solution, we assume that the nonlinear source function satisfies the following estimates: 
\begin{align}
&\|f(t,u)-f(t,v)\|\le C_{f,u,v}\|u-v\|\,\,\,\mbox{for}\,\,\, u,v\in X,\quad\mbox{(local Lipschitz continuity)}
\label{Lipschitz}\\[15pt] 
&\bigg\|\frac{\d^\ell}{\d t^\ell} f(t,u(t))\bigg\| \hspace{137pt} \mbox{(smoothness in $t$ and $u$)} \nonumber \\
&\le C_{f,u,\ell}\sum_{j=1}^\ell \!\!\sum_{\,\,\,\,\, m_1+\cdots +m_j\le\ell} \|\partial_t^{m_1} u(t)\| \,\|\partial_t^{m_2} u(t)\|\,\cdots\,\|\partial_t^{m_j} u(t)\| ,
\quad \ell=0,1,\dots , \label{df}
\end{align} 
where $\|\cdot\|$ denotes the norm of $X$, $C_{f,u,v}$ is a constant depending on $f$, $\|u\|$ and $\|v\|$; similarly, $C_{f,u,\ell}$ is a constant depending on $f$, $\|u(t)\|$, $\ell$, and the summation above extends over all possible positive integers $m_1,\dots,m_j$ satisfying $m_1+\dots+m_j\le \ell$ for a given $j$. 

Assumptions \eqref{Lipschitz}--\eqref{df} are naturally satisfied by a general smooth function $f:\R\rightarrow\R$ in a semilinear parabolic partial differential equation (PDE)
\begin{equation}\label{example-u-f}
\left \{
\begin{aligned}
\partial_tu(x,t) - \Delta u(x,t)  & = f(u(x,t))  && \mbox{for}\,\,\,(x,t)\in\Omega\times (0,T] ,\\ 
u(x,t) &=0 &&\mbox{for}\,\,\,(x,t)\in\partial\Omega\times (0,T],\\
u(x,0) &=u_0(x)&&\mbox{for}\,\,\,x\in\Omega .
\end{aligned}
\right .
\end{equation}
In this case, the Dirichlet Laplacian $\Delta$ generates a bounded analytic semigroup on $X=C_0(\overline\Omega)$, the space of continuous functions on $\overline\Omega$ which equal zero on the boundary $\partial\Omega$; see \cite{Ouhabaz1995}. 
Furthermore, the smooth function $f$ naturally extends to a function of $u\in C_0(\overline\Omega)$, satisfying 
$$
\|f(u)-f(v)\|_{C_0(\overline\Omega)}
\le C\max_{|s|\le \|u\|+\|v\|} |\partial_sf(s)|\,\|u-v\|_{C_0(\overline\Omega)} 
$$
and 
\begin{align*}
\frac{\d}{\d t}f(u(x,t))
&=
 \partial_uf(u) \partial_tu ,\\
\frac{\d^2}{\d t^2}f(u(x,t))
&=\partial_u^2f(u)(\partial_tu)^2+\partial_uf(u)\partial_{tt}u,\\
\frac{\d^3}{\d t^3}f(u(x,t))
&=\partial_u^3f(u)(\partial_tu)^3
+3\partial_u^2f(u)\partial_tu\partial_{tt}u
+\partial_uf(u)\partial_{ttt}u,\\
...&
\end{align*}
Obviously, all these time derivatives of $f(u(x,t))$ satisfy \eqref{df}. 
Hence, the semilinear parabolic PDE \eqref{example-u-f} with a general smooth function $f:\R\rightarrow\R$ is an example of the abstract problem \eqref{ivp} satisfying assumptions \eqref{Lipschitz}--\eqref{df}. 

Assumption \eqref{Lipschitz} is the same as the local Lipschitz continuity assumption used in \cite{Calvo-Palencia-2006} and \cite{Hochbruck-Ostermann-2005}. In \cite{Calvo-Palencia-2006}, authors proved high-order convergence with an addition assumption that the solution is in $C^k([0,T],X)$, which is not satisfied when the initial data is nonsmooth, i.e., $u_0\in X$ instead of $D(A^k)$. In \cite{Hochbruck-Ostermann-2005}, authors proved high-order convergence of exponential integrators with an additional assumption that $\partial_t^kf(u(t))$ is uniformly bounded for $t\in[0,T]$, which is also not satisfied when the initial data is nonsmooth. These additional assumptions in \cite{Calvo-Palencia-2006,Hochbruck-Ostermann-2005} are replaced by \eqref{df} in this paper, which is used to prove the weighted estimates 
$$
\|\partial_t^\ell u(t)\|\le Ct^{-\ell}\quad\mbox{for}\quad \ell=1,\dots,k,
$$ 
which allow the solution to be nonsmooth at $t=0$. These weighted estimates are used to prove high-order convergence of the exponential integrator in this paper. 

Both the regularity analysis and the error analysis in this paper can be similarly extended to semilinear parabolic equations with smoothly varying time-dependent operators. However, the extension to quasilinear parabolic equations with nonsmooth initial data is still not obvious.

\section{Numerical method}
 
We denote by $\widehat g(z):=\int_0^\infty e^{-zt}g(t)\d t$ the Laplace transform of a given function $g$. 
Then we let $g(t):=f(t,u(t))$ and take the Laplace transform of \eqref{mild} in time. This yields 
\begin{align}\label{Lapl-u}
\hat u(z) 
&= (z-A)^{-1}u_0 + (z-A)^{-1}\widehat g(z) .
\end{align}
Since $A$ generates a bounded analytic semigroup on $X$, there exists an angle $\phi\in(0,\frac{\pi}{2})$ such that the operator $(z-A)^{-1}$ is analytic with respect to $z$ in the sector 
$$
\Sigma_{\pi-\phi}:=\{z\in\C: |{\rm arg}(z)|<\pi-\phi\}. 
$$
In order to use the established contour integral techniques of \cite{Lopez-Fernandez-2010,LFPS-2006}, we take inverse Laplace transform of \eqref{Lapl-u} along the contour 
$$
\Gamma_\lambda=\{\lambda (1-\sin(\alpha+{\rm i} s)):s\in\R\} \subset \Sigma_{\pi-\phi} ,
$$
where $\alpha
=\frac{\pi}{4}-\frac{\phi}{2}$ and $\lambda$ is to be determined. 
Then we have
\begin{align*}
u(t) 
&= \frac{1}{2\pi {\rm i}} \int_{\Gamma_\lambda} e^{tz}(z-A)^{-1}(\widehat g(z)  + u_0) \d z.  
\end{align*}
Similarly, by considering $t_{n-1}$ as the initial time, the solution at $t=t_n$ can be written as 
\begin{align}\label{expr-ut}
u(t_n) 
&= \frac{1}{2\pi {\rm i}} \int_{\Gamma_{\lambda_n}} e^{\tau_nz}(z-A)^{-1}(\widehat {g_n}(z) + u(t_{n-1})) \d z ,
\end{align}
where $g_n(s)=f(t_{n-1}+s,u(t_{n-1}+s))$. 

In \cite[Theorem 1]{Lopez-Fernandez-2010} the authors proved that, by choosing 
the parameter 
\begin{align}\label{lambda}
\lambda_n
=\frac{2\pi d K(1-\theta)}{\tau_n a(\theta) }  ,
\end{align}
with 
\begin{align*}
d=\frac{\alpha}{2} 
,\quad \theta=1-\frac1K
\quad\mbox{and}\quad
a(\theta)={\rm arccosh}\bigg(\frac{1}{(1-\theta)\sin(\alpha)}\bigg) , 
\end{align*}
there are quadrature nodes and weights on the contour $\Gamma_{\lambda_n}$, 
$$
z_\ell=\lambda (1-\sin(\alpha+{\rm i} \ell h))
\,\,\,
\mbox{and} 
\,\,\,
w_\ell=\frac{\lambda h}{2\pi} \cos(\alpha+{\rm i} \ell h),
\,\,\, \ell=-K,\dots,K ,
\,\,\,
\mbox{with}\,\,\,h=\frac{a(\theta)}{K},
$$
such that \eqref{expr-ut} can be approximated by a quadrature 
\begin{align*}
u(t_n) 
&\approx \sum_{\ell=-K}^K 
w_\ell e^{\tau_n z_\ell}(z_\ell-A)^{-1} (\widehat {g_n}(z_\ell) + u(t_{n-1}))
\end{align*}
with an error of $O(e^{-K/C})$. 

Therefore, if $u^{(\tau)}=(u_n)_{n=0}^N$ denotes the numerical approximation of $(u(t_n))_{n=1}^N$, then we approximate the source function 
$f(t,u(t))$ by an extrapolation polynomial of degree $k-1$:
$$
f_n(t;u^{(\tau)})
=\sum_{j=1}^k L_j(t)f(t_{n-j},u_{n-j}) \quad\mbox{for $t\in(t_{n-1},t_n]$},
$$ 
where $L_j(t)$ is the unique polynomial of degree $k-1$ such that 
$$
L_j(t_{n-i})=\delta_{ij},\quad i=1,\dots,k. 
$$ 
For $n\ge k+1$ and given numerical solutions $u_{n-j}$, $j=1,\dots,k$, we denote 
$$
g_n(s;u^{(\tau)})=f_n(t_{n-1}+s;u^{(\tau)})
,\quad
L_{j,n}(s)=L_{j}(t_{n-1}+s), 
$$
and compute  
\begin{align}\label{method}
u_n
&= \sum_{\ell=-K}^K 
w_\ell e^{\tau_n z_\ell}(z_\ell-A)^{-1} (\widehat {g_n}(z_\ell;u^{(\tau)}) + u_{n-1}) \nonumber \\
&= \sum_{\ell=-K}^K 
w_\ell e^{\tau_n z_\ell}(z_\ell-A)^{-1}\Big(\sum_{j=1}^k\widehat {L_{j,n}}(z_\ell)f(t_{n-j},u_{n-j}) + u_{n-1}\Big) . 
\end{align}

The numerical solution at the starting $k$ steps can be computed by using the exponential Euler method 
\begin{align}\label{exp-Euler}
u_n
&= \sum_{\ell=-K}^K 
w_\ell e^{\tau_n z_\ell}(z_\ell-A)^{-1}\big(z_\ell^{-1}f(t_{n-1},u_{n-1}) + u_{n-1}\big) ,
\quad n=1,\dots,k. 
\end{align}
Since the stepsize choice in \eqref{stepsize} implies $\tau_n=O(\tau^{\frac{1}{1-\beta}})=O(\tau^k)$ for the starting $k$ steps, the exponential Euler scheme \eqref{method} can keep the errors of numerical solutions within $O(\tau^k)$ at the starting $k$ steps.

The main result of this paper is the following theorem.\medskip

\begin{theorem}\label{MainTHM}
Let $u_0\in X$ and assume that the nonlinear problem \eqref{ivp} has a mild solution $u\in C([0,T];X)$. Then there exist constants $\tau_0$ and $c_0$ such that for $\tau\le \tau_0$ and $K\ge \frac32 c_0\ln(\tau^{-1})$, the solutions $u_n$, $n=1,\dots,N$, given by \eqref{method}-\eqref{exp-Euler} with stepsize choice from \eqref{stepsize}, satisfies the following error estimate:
\begin{align}\label{error}
\max_{1\le n\le N} \|u_n-u(t_n)\|
\le C\tau^k + C\tau^{-1}e^{-K/c_0} .
\end{align}
\end{theorem}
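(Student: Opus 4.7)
The plan is to build the proof on the weighted temporal regularity
$\|\partial_t^\ell u(t)\|\le C t^{-\ell}$ for $\ell=0,1,\dots,k$, which the assumptions \eqref{Lipschitz}--\eqref{df} are custom-designed to give. I would establish these bounds first, by differentiating the mild solution formula \eqref{mild}, using the analytic semigroup estimate $\|A^\ell e^{tA}\|\le Ct^{-\ell}$ together with \eqref{df} and a bootstrap on $\ell$. These are the only regularity estimates available when $u_0\in X$ only, and they drive the entire convergence proof.

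Next, I would split the error as
$u_n-u(t_n)=\bigl(u_n-\tilde u_n\bigr)+\bigl(\tilde u_n-u(t_n)\bigr)$,
where $\tilde u_n$ is the semi-discrete one-step solution obtained from the exact contour integral over $\Gamma_{\lambda_n}$ applied to the extrapolation polynomial $f_n(\cdot;u^{(\tau)})$ and the previous numerical value $u_{n-1}$. The first piece is the quadrature error and is controlled by the Lopez--Fern\'andez estimate, giving the bound $Ce^{-K/c_0}\bigl(\|u_{n-1}\|+\sum_{j=1}^k\|f(t_{n-j},u_{n-j})\|\bigr)$, whose accumulation over $N=O(\tau^{-1})$ steps is responsible for the $C\tau^{-1}e^{-K/c_0}$ term in \eqref{error}. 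For the second piece, subtracting the analogous identity \eqref{expr-ut} for $u(t_n)$ gives
\[
\tilde u_n-u(t_n)
= e^{\tau_n A}\bigl(u_{n-1}-u(t_{n-1})\bigr)
+\int_{t_{n-1}}^{t_n} e^{(t_n-s)A}\bigl[f_n(s;u^{(\tau)})-f(s,u(s))\bigr]\d s.
\]
The bracketed term is decomposed as $[f_n(s;u^{(\tau)})-f_n(s;u)]+[f_n(s;u)-f(s,u(s))]$; the first part is $O(1)\max_{1\le j\le k}\|u_{n-j}-u(t_{n-j})\|$ by \eqref{Lipschitz} and uniform boundedness of the Lagrange weights, and the second part is the classical extrapolation remainder $\tfrac{1}{k!}\prod_{j=1}^k(s-t_{n-j})\partial_s^k f(s,u(s))$, which, combined with the regularity estimate, is bounded by $C\tau_n^{k}\, t_{n-k}^{-k}$.

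Feeding this into a discrete Gronwall argument (together with the bounded-semigroup estimate $\|e^{\tau_n A}\|\le C$) and a standard a priori boundedness bootstrap for the $u_n$'s (needed to control the local Lipschitz constants $C_{f,u_n,u(t_n)}$), the accumulated error is controlled by $\sum_n \tau_n\cdot \tau_n^{k}\, t_{n-k}^{-k}$. The variable stepsize rule \eqref{stepsize} turns this sum into a Riemann approximation of $\tau^k\int_0^T (t/T)^{k\beta}t^{-k}\d t$, which is convergent precisely under the stated condition $\beta>1-\tfrac{1}{k}$, and produces the $C\tau^k$ term. The starting values $u_1,\dots,u_k$ are handled separately: since $\tau_n=O(\tau^{1/(1-\beta)})=O(\tau^k)$ for $n\le k$, even the first-order exponential Euler scheme \eqref{exp-Euler} contributes only $O(\tau^k)$ to the error, and serves as the base of the induction.

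The main obstacle I expect is the interaction of three features that are usually treated separately: the nonsmoothness of $u$ at $t=0$ (which forces the weighted regularity and the $t_{n-k}^{-k}$ singularity in the local error), the variable stepsize (whose tuning has to convert a borderline-divergent Riemann sum into an $O(\tau^k)$ bound), and the $k$-step extrapolation combined with nonlinearity (which requires simultaneously an a priori bound on the numerical solutions and a sharp discrete Gronwall argument uniform in $n$). Carrying the induction through the starting phase, where $t_{n-k}$ is as small as $t_0=0$, is the most delicate point and is the main reason the exponential Euler starter with $\tau_n=O(\tau^k)$ is used.
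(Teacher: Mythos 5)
Your proposal follows essentially the same route as the paper: the weighted regularity estimates $\|\partial_t^\ell u(t)\|\le Ct^{-\ell}$ proved by induction on $\ell$ (the paper obtains these by multiplying the mild solution formula by $t^\ell$, redistributing the weight via a binomial expansion, and applying Gronwall's inequality --- slightly more delicate than ``differentiate and bootstrap,'' since $A^\ell e^{(t-s)A}$ alone is not integrable), followed by a splitting of the error into an extrapolation/consistency part bounded by $\sum_n\tau_n\,\tau_n^k t_n^{-k}=O(\tau^k)$ under $\beta>1-\tfrac1k$ and a quadrature part accumulating to $O(\tau^{-1}e^{-K/c_0})$, all closed by an induction-plus-Gronwall argument with the exponential Euler starter on the $O(\tau^k)$-sized initial steps. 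The only cosmetic difference is that the paper routes the two error sources through a global intermediate function $v$ solving the perturbed problem \eqref{ivp-vv} (decomposition $e_n=\eta(t_n)+\xi_n$) rather than your per-step recursion with $\tilde u_n$, but the resulting recursions and estimates are identical.
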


\begin{remark}{\upshape
For $K\ge (k+1)c_0\ln(\tau^{-1})$ there holds $\tau^{-1}e^{-K/c_0}\le \tau^k$. Therefore, $O(\ln(1/\tau))$ quadrature nodes are needed to have an error of $O(\tau^k)$. 
}
\end{remark}

\begin{remark}{\upshape
Instead of choosing different $\lambda_n$ at different time steps, one can also divide the time interval $[t_{k+1},T]$ into $O(\log(\tau^{-1}))$ parts $[\Lambda^{-j-1}T,\Lambda^{-j}T]$, $j=0,\dots,J=O(\log(\tau^{-1}))$, with 
\begin{align}
\lambda_n 
=\frac{2\pi d K(1-\theta)}{\Lambda^\beta\tau_{K_j} a(\theta) } \quad
\mbox{being constant for $t_n\in [\Lambda^{-j-1}T,\Lambda^{-j}T]$},
\end{align}
and
\begin{align*}
d=\frac{\alpha}{2} 
,\quad \theta=1-\frac1K
\quad\mbox{and}\quad
a(\theta)={\rm arccosh}\bigg(\frac{\Lambda}{(1-\theta)\sin(\alpha)}\bigg) ,  
\end{align*}
where $\tau_{K_j}$ denotes the minimal stepsize for $t_n\in [\Lambda^{-j-1}T,\Lambda^{-j}T]$. This was used in many articles; see \cite{Lopez-Fernandez-2010,LF-Lubich-Schadle-2008,LFPS-2006,Schadle-LF-Lubich-2006}. By this method, at most $O(\log(\tau^{-1}))$ different contours are needed to have an error of $O(\tau^k)$. }
\end{remark}

\section{Proof of Theorem \ref{MainTHM}}

The proof consists of two parts. In section \ref{reg}, we prove the regularity of the solution $u$. By using the regularity result, we estimate errors of numerical solutions in section \ref{sec:error}.

\subsection{Regularity of solution}\label{reg}
It is well known that the solution of a linear parabolic equation has higher regularity at positive time and satisfies the estimate $\|\partial_t^\ell u(t)\|\le Ct^{-\ell}$, $\ell=0,1,\dots$, for a nonsmooth initial data $u_0\in X$. 
In this subsection, we prove that this is also true for the nonlinear problem \eqref{ivp} if the source function $f$ is smooth with respect to $t$ and $u$ in the sense of \eqref{df}. 
Since we have not found such a result in the literature for semilinear parabolic equations, we present the proof in the following lemma.  \medskip

\begin{lemma}
If $u\in C([0,T];X)$ is a mild solution of \eqref{ivp}, then 
$u\in C^k((0,T];X)$ and 
$$
\|\partial_t^\ell u(t)\|\le Ct^{-\ell},\quad \ell=0,1,\dots,k.
$$
\end{lemma}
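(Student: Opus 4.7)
The plan is to proceed by induction on $\ell$, establishing simultaneously that $u \in C^\ell((0,T]; X)$ and $\|\partial_t^\ell u(t)\| \le C t^{-\ell}$. The base case $\ell = 0$ is the hypothesis $u \in C([0,T]; X)$. For the inductive step, set $g(s) := f(s, u(s))$ and apply \eqref{df} together with the inductive bounds on $u, u', \ldots, u^{(\ell-1)}$; every summand in \eqref{df} carries a factor $s^{-(m_1 + \cdots + m_j)}$ which, on the bounded interval $(0, T]$, is at most $C s^{-m}$, giving
$$
\| g^{(m)}(s) \| \le C s^{-m}, \qquad m = 0, 1, \ldots, \ell - 1.
$$

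To control $u^{(\ell)}(t)$ at a fixed $t > 0$, I would restart the mild formula at $t/2$,
$$
u(t) = e^{(t/2)A} u(t/2) + \int_{t/2}^{t} e^{(t-s)A} g(s) \, \d s,
$$
and differentiate $\ell$ times in $t$. The first piece contributes $2^{-\ell} A^\ell e^{(t/2)A} u(t/2)$, which is bounded by $C t^{-\ell} \|u(t/2)\| \le C t^{-\ell}$ via the standard analytic-semigroup estimate $\|A^\ell e^{sA}\| \le C s^{-\ell}$. The convolution piece is handled by an integration-by-parts scheme that shifts $\ell - 1$ time derivatives of the kernel onto $g$ (using the identity $A e^{(t-s)A} = -\partial_s e^{(t-s)A}$): this produces boundary terms of the form $A^p e^{(t/2)A} g^{(q)}(t/2)$ with $p + q \le \ell - 1$, each bounded by $C t^{-p}\cdot C t^{-q} \le C t^{-(\ell - 1)}$, together with one remaining singular integral in which the kernel is tamed by subtracting and adding $g^{(\ell-1)}(t)$ and invoking the H\"older continuity of $g^{(\ell-1)}$ on $[t/2, t]$ (which itself follows from the stronger bound on $g^{(\ell)}$ supplied by the induction). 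Collecting the contributions yields $\|\partial_t^\ell u(t)\| \le C t^{-\ell}$.

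The hard part is not the bookkeeping of the induction but the very first step of the bootstrap, from a merely continuous mild solution to a classically differentiable one: before any of the above differentiation under the integral is legitimate, one must verify that $u$, and hence $g$, is locally H\"older continuous on $[t_0, T]$ for every $t_0 > 0$. This is done by exploiting the smoothing property $\|(e^{hA} - I) A^{-\mu}\| \le C h^\mu$ for $0 < \mu < 1$ in the mild formula restarted at $t_0/2$, using the local Lipschitz assumption \eqref{Lipschitz} to close the estimate on $g$. Once this H\"older regularity is in hand, the classical analytic-semigroup theory yields $u \in C^1((0,T]; X)$ satisfying $u'(t) = Au(t) + g(t)$ with $\|u'(t)\| \le C t^{-1}$, after which the induction described above delivers all higher-order weighted bounds up to order $k$.
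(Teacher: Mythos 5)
Your overall strategy (induction on $\ell$, restarting the mild formula at $t/2$, shifting derivatives onto $g$) is a legitimate classical route, but as written it contains a circularity at the top order. After shifting $\ell-1$ derivatives onto $g$, your remaining singular integral requires H\"older continuity of $g^{(\ell-1)}$ on $[t/2,t]$, and you justify this by appealing to ``the stronger bound on $g^{(\ell)}$ supplied by the induction.'' No such bound is supplied: by the chain rule (the $j=1$ term of \eqref{df}), $g^{(\ell)}$ contains $\partial_u f(u)\,\partial_t^\ell u$, so bounding $g^{(\ell)}$ requires precisely the estimate on $\partial_t^\ell u$ you are trying to prove. To repair this you would have to strengthen the induction hypothesis to carry (weighted) H\"older continuity of $u^{(\ell-1)}$ --- and hence of $g^{(\ell-1)}$ --- through each step, establish it by the smoothing estimate $\|(e^{hA}-I)A^{-\mu}\|\le Ch^\mu$ as you do for the base case, and then also re-establish H\"older continuity of $u^{(\ell)}$ before passing to the next level; none of this is spelled out, and it is the hard part, not an afterthought.

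The paper takes a different route that sidesteps the singular integral entirely: it multiplies the mild formula by $t^\ell$ and expands $t^\ell=((t-s)+s)^\ell$ binomially, so that each resulting term $\int_0^t (t-s)^j e^{(t-s)A} s^{\ell-j} f(s,u(s))\,\d s$ carries exactly enough powers of $(t-s)$ for $\partial_s^j\bigl(s^j e^{sA}\bigr)$ to be a bounded operator family --- no H\"older continuity of any $g^{(m)}$ is ever needed. The top-order term $\partial_t^\ell u$, which inevitably appears on the right-hand side through $\frac{\d^\ell}{\d t^\ell}f(t,u(t))$, shows up there weighted by $(t-s)^\ell$ inside an integral from $0$ to $t$ and is absorbed by Gronwall's inequality. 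Your proposal has no analogue of this Gronwall step, which is the mechanism the paper uses to close the self-referential top-order estimate; without it (or the strengthened H\"older induction above), the argument does not close.
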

\begin{proof}
If $u\in C([0,T];X)$ then the constant $C_{f,u,\ell}$ in \eqref{df} is bounded for $1\le \ell\le k$. We simply denote this constant by $C$. 
By mathematical induction, we assume that for $m=0,\dots,\ell-1$,
\begin{equation}\label{mathind}
\|\partial_t^mu(t)\|\le Ct^{-m},\quad t\in(0,T] . 
\end{equation}
Then \eqref{df} implies 
\begin{equation}\label{df2}
\bigg\|\frac{\d^m}{\d t^m} f(t,u(t))\bigg\| \le Ct^{-m},\quad t\in(0,T] ,
\quad\mbox{for}\,\,\, m=0,\dots,\ell-1 .
\end{equation}
In the following, we prove that \eqref{mathind} also holds for $m=\ell$. 

Multiplying \eqref{mild} by $t^\ell$ yields 
\begin{align}\label{tlut}
t^\ell u(t)
&=t^\ell e^{tA}u_0
+\int_0^t (t-s+s)^\ell e^{(t-s)A}f(s,u(s))\d s \nonumber \\
&=t^\ell e^{tA}u_0
+\sum_{j=0}^\ell \left(\begin{subarray}{c}
\displaystyle \ell\\ 
\displaystyle j
\end{subarray}\right)
\int_0^t(t-s)^j e^{(t-s)A}s^{\ell-j} f(s,u(s))\d s\nonumber \\
&=:t^\ell e^{tA}u_0
+\sum_{j=0}^\ell\left(\begin{subarray}{c}
\displaystyle \ell\\ 
\displaystyle j
\end{subarray}\right) w_{\ell,j}(t) ,
\end{align}
with 
\begin{align} \label{def-gj}
w_{\ell,j}(t)=\int_0^t g_j(t, s)\d s
\quad\mbox{and}\quad
g_j(t, s) = (t-s)^j e^{(t-s)A}s^{\ell-j} f(s,u(s)) .
\end{align} 
Note that 
\begin{align} \label{diff-integral-change}
\partial_t^j w_{\ell,j}(t)
=\partial_t^j  \int_{0}^{t} g_j(t, s)\, ds  
&= \partial_t^{j-1}  \partial_t \int_{0}^{t} g_j(t, s)\, ds \notag \\
&= \partial_t^{j-1}\int_{0}^{t} \partial_t  g_j(t, s) \, ds
+ \partial_t^{j-1} \bigl[g_j(t, s) |_{s=t} \bigr]  \notag \\
&= \partial_t^{j-2}\int_{0}^{t} \partial_t^2 g_j(t, s) 
+ \partial_t^{j-2} [\partial_t g_j(t, s) |_{s=t}] + \partial_t^{j-1} [g_j(t, s) |_{s=t}] \notag \\
&= \cdots \notag \\ 
&= \int_{0}^{t} \partial_t^j g_j(t, s) \, ds
+ \sum_{m=1}^j \partial_t^{j-m} [\partial_t^{m-1} g_j(t, s) |_{s=t}]  .
\end{align}
From the expression of $g_j$ in \eqref{def-gj} we know that $\partial_t^{m-1} g_j(t, s) |_{s=t}\equiv 0$ for $m=1,\dots,j$. As a result, we have  
\begin{align*} 
\partial_t^j w_{\ell,j}(t)   
&= \int_{0}^{t} \partial_t^j g_j(t, s) \, ds .\\
&= \int_0^t \partial_t^j[(t-s)^j e^{(t-s)A}]s^{\ell-j} f(s,u(s))\d s \\
&= \int_0^t \partial_s^j \bigl[s^j e^{sA}\bigr]  (t-s)^{\ell-j} f(t-s,u(t-s)) \, \d s 
\quad\mbox{(change of variable)} \\
&=: \int_0^t h_{\ell-j}(t, s) \, \d s . 
\end{align*}
Since the function $h_{\ell-j}(t, s) = \partial_s^j \bigl[s^j e^{sA}\bigr]  (t-s)^{\ell-j} f(t-s,u(t-s))$ contains a factor $(t-s)^{\ell-j}$, by a similar argument as \eqref{diff-integral-change} we have  
\begin{align*}
\partial_t^{\ell-j} \partial_t^j w_{\ell,j}(t)   
= \partial_t^{\ell-j} \int_{0}^{t}  h_{\ell-j}(t, s) \, ds 
= \int_{0}^{t} \partial_t^{\ell-j} h_{\ell-j}(t, s) \, ds ,
\end{align*}
which implies that 
\begin{align*}
\partial_t^{\ell} w_{\ell,j}(t) 
= \int_0^t \partial_s^j(s^j e^{sA})\frac{\d^{\ell-j}}{\d t^{\ell-j}}[(t-s)^{\ell-j} f(t-s,u(t-s))]\d s , 
\quad \mbox{for} \, \, 0 \le j \le \ell . 
\end{align*}
As a result, we have 
\begin{align}\label{dtw}
\|\partial_t^{\ell}w_{\ell,j}(t) \|
&\le \int_0^t C\|\partial_s^j(s^j e^{sA})\|_{X\rightarrow X}
\bigg\|\frac{\d^{\ell-j}}{\d t^{\ell-j}}[(t-s)^{\ell-j} f(t-s,u(t-s))]\bigg\|
\d s \nonumber \\
&\le \int_0^t C
\bigg\|\frac{\d^{\ell-j}}{\d t^{\ell-j}}[(t-s)^{\ell-j} f(t-s,u(t-s))]\bigg\|
\d s ,
\end{align}
where we have used 
$\|\partial_s^j(s^j e^{sA})\|_{X\rightarrow X}\le C,$ 
which is a consequence of the analytic semigroup estimate 
$$\|\partial_s^me^{sA} \|_{X\rightarrow X}\le Cs^{-m},\quad m=0,1,\dots $$  

If $1\le j\le \ell$ then substituting \eqref{df2} into \eqref{dtw} yields 
\begin{align}\label{dtwj}
\|\partial_t^{\ell}w_{\ell,j}(t) \|
\le C ,\quad 1\le j\le \ell. 
\end{align}

If $j=0$ then substituting \eqref{df} and \eqref{df2} into \eqref{dtw} yields 
\begin{align}\label{dtw0}
\|\partial_t^{\ell}w_{\ell,0}(t) \|
&\le \int_0^t C
\bigg\|\frac{\d^{\ell}}{\d t^{\ell}}[(t-s)^{\ell} f(t-s,u(t-s))]\bigg\|
\d s \nonumber \\
&\le \int_0^t C\sum_{j=1}^\ell 
\bigg\|[(t-s)^{\ell-j} \frac{\d^{\ell-j}}{\d t^{\ell-j}}f(t-s,u(t-s))]\bigg\|
\d s \nonumber \\
&\quad
+\int_0^t C
\bigg\|[(t-s)^{\ell} \frac{\d^{\ell}}{\d t^{\ell}}f(t-s,u(t-s))]\bigg\|
\d s 
\qquad\mbox{(product rule)} \nonumber \\
&\le C+\int_0^t C
\bigg\|[(t-s)^{\ell} \frac{\d^{\ell}}{\d t^{\ell}}f(t-s,u(t-s))]\bigg\|
\d s 
\end{align}
where we have used \eqref{df2} in estimating $(t-s)^{\ell-j}\frac{\d^{\ell-j}}{\d t^{\ell-j}}f(t-s,u(t-s))$ for $j\ge 1$.  
By considering the cases $j\ge 2$ and $j=1$ in \eqref{df}, separately, we have 
\begin{align*}
&\bigg\|\frac{\d^{\ell}}{\d t^{\ell}}f(t-s,u(t-s))\bigg\| \\ 
&\le 
C\sum_{j=2}^\ell \!\!\sum_{\,\,\,\,\, m_1+\cdots +m_j\le\ell} \|\partial_t^{m_1} u(t-s)\| \,\|\partial_t^{m_2} u(t-s)\|\,\cdots\,\|\partial_t^{m_j} u(t-s)\|
+ C
\|\partial_t^\ell u(t-s)\| \\ 
&\le    C(t-s)^{-\ell} 
+C
\|\partial_t^\ell u(t-s)\| .
\end{align*}
Substituting the inequality above into \eqref{dtw0}, we obtain 
\begin{align}\label{dtw0-2}
\|\partial_t^{\ell}w_{\ell,0}(t) \|
&\le C+\int_0^t C\|(t-s)^{\ell} \partial_t^\ell u(t-s)\| 
\d s = C+\int_0^t C\|s^{\ell} \partial_s^\ell u(s)\| 
\d s .
\end{align}
Then substituting \eqref{dtwj} and \eqref{dtw0-2} into \eqref{tlut} yields 
\begin{align}\label{tlut2}
\|\partial_t^{\ell}(t^\ell u(t))\|
&\le \| \partial_t^{\ell}(t^\ell e^{tA})u_0\| 
+\sum_{j=0}^\ell \left(\begin{subarray}{c}
\displaystyle \ell\\ 
\displaystyle j
\end{subarray}\right) \|\partial_t^{\ell}w_{\ell,j}(t) \| \nonumber \\
&\le 
C+\int_0^t C\|s^{\ell} \partial_t^\ell u(s)\| 
\d s . 
\end{align}
By using the product rule we can derive that 
$$
\|t^\ell \partial_t^{\ell}u(t)\|
\le \|\partial_t^{\ell}(t^\ell u(t))\|
+C\sum_{j=1}^\ell \|t^{\ell-j} \partial_t^{\ell-j}u(t)\| 
\le 
\|\partial_t^{\ell}(t^\ell u(t))\|
+C ,
$$
where we have used the induction assumption \eqref{mathind} in the last inequality. The above inequality and \eqref{tlut2} imply 
\begin{align}\label{tlut3}
\|t^\ell \partial_t^{\ell}u(t)\|
&\le 
C+\int_0^t C\|s^{\ell} \partial_t^\ell u(s)\| 
\d s . 
\end{align}
By using Gronwall's inequality, we derive 
\begin{align}\label{tlut4}
\|t^\ell \partial_t^{\ell}u(t)\|
&\le 
C ,\quad \forall\, t\in(0,T]. 
\end{align}
This proves \eqref{mathind} for $m=\ell$, and therefore the mathematical induction is closed. 
\hfill\end{proof}

\subsection{Error estimate}\label{sec:error}

We shall introduce a function $v(t)$ which is intermediate between $u(t_n)$ and $u_n$, and denote 
\begin{align}\label{def-en-eta-xi}
e_n:=u(t_n)-u_n,\quad 
\eta(t):=u(t)-v(t),
\quad\mbox{and}\quad
\xi_n:=v(t_n)-u_n ,
\end{align}
which imply the error decomposition 
$$
e_n=\eta(t_n)+\xi_n.
$$
Then we shall estimate $\eta(t_n)$ and $\xi_n$ separately. 

To this end, we define $v(t_k)=u_k$ and consider $n\ge k+1$: for given $v(t_{n-1})$ we define $v(t)$ for $t\in(t_{n-1},t_n]$ by   
\begin{align}\label{expr-vt}
v(t) 
&=  \frac{1}{2\pi {\rm i}} 
\int_{\Gamma_{\lambda_n}} e^{(t-t_{n-1}) z} (z-A)^{-1}
(\widehat g_n(z;u^{(\tau)})+v(t_{n-1}))\d z .
\end{align}
Comparing \eqref{expr-vt} with \eqref{expr-ut}, we see that $v(t)$ is actually the solution of the initial-value problem 
\begin{equation}\label{ivp-vv}
\left \{
\begin{aligned}
v'(t) - Av(t)&=f^{(\tau)}(t;u^{(\tau)})  \quad\mbox{for}\,\,\, t\in(t_k,T] ,\\
v(t_k)&=u_k ,
\end{aligned}
\right .
\end{equation}
where
$$
f^{(\tau)}(t;u^{(\tau)}) 
=f_n(t;u^{(\tau)}) ,\quad \mbox{for}\,\,\, t\in(t_{n-1},t_n], \quad n=k+1,k+2,\dots 
$$

To estimate $\eta(t_n)$, we consider the difference between \eqref{ivp} and \eqref{ivp-vv}. By using the notation in \eqref{def-en-eta-xi}, we see that $\eta(t)$ satisfies the following equation: 
\begin{equation}\label{ivp-u-w}
\left \{
\begin{aligned}
\eta'(t) - A\eta(t)&=f(t,u(t))-f^{(\tau)}(t,u^{(\tau)})  \quad\mbox{for}\,\,\, t\in(t_k,T] ,\\
\eta(t_k)&=e_k . 
\end{aligned}
\right .
\end{equation}
where
\begin{align*}
&\|f(t,u(t))-f^{(\tau)}(t,u^{(\tau)})\| \\ 
&=
\|f(t,u(t))-f^{(\tau)}(t;u(t_n)_{n=0}^N)+f^{(\tau)}(t;u(t_n)_{n=0}^N)-f^{(\tau)}(t,u^{(\tau)})\|\\
&\le C\tau_n^k \max_{t\in[t_{n-k},t_n]}
\big\|\mbox{$\frac{\d^k}{\d t^k}$}f(t,u(t))\big\|
+C_{n,u^{(\tau)}} \max_{1\le j\le k}\|e_{n-j}\| 
\quad\mbox{(use \eqref{Lipschitz}-\eqref{df} here)}\\
&\le C\tau_n^k t_{n-k}^{-k}+C_{n,u^{(\tau)}} \max_{1\le j\le k}\|e_{n-j}\| \\
&\le C\tau_n^k t_{n}^{-k}+C_{n,u^{(\tau)}} \max_{1\le j\le k}\|e_{n-j}\| , 
\quad \mbox{for}\,\,\, t\in(t_{n-1},t_n],\,\,\, n\ge k+1 .  
\end{align*}
where $C_{n,u^{(\tau)}} $ is a constant depending on $\|u_{n-j}\|$ for $j=1,\dots,k$. 

By using mathematical induction, we assume that 
\begin{align}\label{mathind-3}
\|u_{j}-u(t_j)\|=\|e_j\|\le 1
\quad\mbox{for}\,\,\, 
1\le j\le m-1 , 
\end{align}
then $C_{n,u^{(\tau)}} $ is bounded for $k+1\le n\le m$, and therefore 
\begin{align*}
&\|f(t,u(t))-f^{(\tau)}(t,u^{(\tau)})\| \\
&
\le C\tau_n^k t_{n}^{-k}+C \max_{1\le j\le k}\|e_{n-j}\| , 
\,\,\, \mbox{for}\,\,\, t\in(t_{n-1},t_n],\,\,\, k+1 \le n\le m .  
\end{align*}
Then we have 
\begin{align}\label{error-w}
\|\eta(t_{n})\|
&=
\bigg\|e^{(t_{n}-t_k)A}e_k+\int_{t_k}^{t_{n}} e^{(t_{n}-s)A}(f(s,u(s))-f^{(\tau)}(s;u^{(\tau)}))\d s\bigg\| \nonumber \\
&\le 
C\|e_k\|+
C\int_{t_k}^{t_{n}} \|f(s,u(s))-f^{(\tau)}(s;u^{(\tau)})\|\d s \nonumber \\ 
&\le 
C\|e_k\|+C\sum_{j=k+1}^n \tau_j \tau_j^k t_{j}^{-k}+C\sum_{j=1}^n\tau\|e_{j}\| \nonumber \\
&\le 
C\max_{0\le j\le k}\|e_j\|+C\sum_{j=k+1}^{n}\tau\|e_{j}\| 
+C\tau^k ,
\end{align} 
where we have used the following estimate in the last inequality: 
$$
\sum_{j=k+1}^n \tau_j \tau_j^k t_{j}^{-k}
\le 
C\tau^k\sum_{j=k+1}^n \tau_j  t_{j}^{k(\beta-1)} 
\le 
C\tau^k\int_{t_k}^{t_n} t^{k(\beta-1)} \d t
\le C\tau^k ,
\quad\mbox{if}\,\,\, k(\beta-1)+1>0. 
$$
This justifies the choice of $\beta$ in \eqref{stepsize}. 

To estimate $\xi_n=v(t_n)-u_n$, we note that 
\begin{align}\label{exp-vn}
v(t_n) 
&= \frac{1}{2\pi {\rm i}}
\int_{\Gamma_{\lambda_n}} e^{\tau_n z} (z-A)^{-1}
(\widehat g_n(z;u^{(\tau)})+v(t_{n-1}))\d z \nonumber \\
&= \frac{1}{2\pi {\rm i}}
\int_{\Gamma_{\lambda_n}} e^{\tau_n z} (z-A)^{-1}
(\widehat g_n(z;u^{(\tau)})+u_{n-1})\d z
+ e^{\tau_n A}\xi_{n-1} \nonumber \\
&= u_n
+ e^{\tau_n A}\xi_{n-1}\nonumber\\
&\quad 
+\frac{1}{2\pi {\rm i}}
\int_{\Gamma_{\lambda_n}} e^{\tau_n z} (z-A)^{-1}
(\widehat g_n(z;u^{(\tau)})+u_{n-1})\d z \nonumber \\
&\quad
- \sum_{\ell=-K}^K 
w_\ell e^{\tau_n z_\ell}(z_\ell-A)^{-1}(\widehat g_n(z_\ell;u^{(\tau)})+u_{n-1}) 
,
\end{align}
where we have used the identity $ \frac{1}{2\pi {\rm i}} \int_{\Gamma_{\lambda_n}} e^{\tau_n z} (z-A)^{-1}\xi_{n-1} \d z=e^{\tau_n A}\xi_{n-1}$. 
Since $$\widehat{g_n}(z;u^{(\tau)})=\sum_{j=1}^k\widehat {L_{j,n}}(z_\ell)f(t_{n-j},u_{n-j})$$
and the polynomial ${L_{j,n}}(z)$ satisfies $|\widehat {L_{j,n}}(z)|\le C|z|^{-\nu}$ for some $\nu\ge 0$, it follows that 
$$
\|(z-A)^{-1}
(\widehat g_n(z;u^{(\tau)})+u_{n-1})\|
\le C|z|^{-1}\Big(|z|^{-\nu} \sum_{j=1}^k\|f(t_{n-j},u_{n-j})\|+  \|u_{n-1}\|\Big). 
$$
For a function satisfying the estimate above, in \cite[Theorem 1]{LFPS-2006} (also see \cite{Lopez-Fernandez-2010}) the authors proved that 
\begin{align*}
&\bigg\|\frac{1}{2\pi {\rm i}}\int_{\Gamma_{\lambda_n}} e^{\tau_n z} (z-A)^{-1}
(\widehat g_n(z;u^{(\tau)})+u_{n-1})\d z \\
&\quad 
- \sum_{\ell=-K}^K 
w_\ell e^{\tau_n z_\ell}(z_\ell-A)^{-1}(\widehat g_n(z_\ell;u^{(\tau)})+u_{n-1}) \bigg\| \\
&
\le Ce^{-K/c_0}\Big(\sum_{j=1}^k\|f(t_{n-j},u_{n-j})\|+  \|u_{n-1}\|\Big)
\end{align*}
for some constant $c_0$. 
By choosing $e^{-K/c_0}=\tau^{k+1}$, which requires $m=O(\ln(\tau^{-1}))$, 
substituting the inequality above into \eqref{exp-vn} yields 
\begin{align*}
\|\xi_n-e^{\tau_n A}\xi_{n-1} \|
\le Ce^{-K/c_0}\Big(\sum_{j=1}^k\|f(t_{n-j},u_{n-j})\|+  \|u_{n-1}\|\Big) . 
\end{align*}

If \eqref{mathind-3} holds then $\|f(t_{n-j},u_{n-j})\|\le C$ for $j=1,\dots,k$ and $k+1\le n\le m$, and therefore 
\begin{align*}
&\|\xi_n-e^{\tau_n A}\xi_{n-1} \|\le Ce^{-K/c_0} \quad\mbox{for $k+1\le n\le m$}.
\end{align*}
Therefore, $q_n:=\xi_n-e^{\tau_nA}\xi_{n-1}$ satisfies $\|q_n\|\le Ce^{-K/c_0}$ and 
\begin{align*}
\xi_n
&=q_n+e^{\tau_nA}\xi_{n-1} \\
&=q_n+e^{\tau_nA}q_{n-1}+e^{(\tau_n+\tau_{n-1})A}\xi_{n-2} \\
&=q_n+e^{\tau_nA}q_{n-1}+e^{(\tau_n+\tau_{n-1})A}q_{n-2}+e^{(\tau_n+\tau_{n-1}+\tau_{n-2})A}\xi_{n-2} \\
&=\dots\\
&=\sum_{j=0}^{n-k-1} e^{(t_n-t_{n-j})A}q_{n-j} ,
\end{align*}
where the last equality holds because $\xi_k=0$. 
From the equation above we derive, for $k+1\le n\le m$, 
\begin{align}\label{error-xi}
\|\xi_n\|
\le 
\sum_{j=0}^{n-k-1} \|e^{(t_n-t_{n-j})A}q_{n-j}\|  
\le 
\sum_{j=0}^{n-k-1} C\|q_{n-j}\|  
\le 
\sum_{j=0}^{n-k-1} Ce^{-K/c_0}
\le C\tau^{-1}e^{-K/c_0}.
\end{align}

Combining \eqref{error-w} and \eqref{error-xi} and using the decomposition $e_n=\eta(t_n)+\xi_n$, we have 
\begin{align*} 
\|e_n\|
&\le \|\eta(t_{n})\|+\|\xi_n\| \\
&\le 
C\max_{0\le j\le k}\|e_j\|+C\sum_{j=k+1}^{n}\tau\|e_{j}\| 
+C\tau^k +C\tau^{-1}e^{-K/c_0},
\quad\mbox{for $k+1\le n\le m$}.
\end{align*} 
By using Gronwall's inequality, we obtain 
\begin{align} \label{alb}
\|e_n\|
&\le 
C\max_{0\le j\le k}\|e_j\|+C\tau^k+C\tau^{-1}e^{-K/c_0} , \quad\mbox{for $k+1\le n\le m$}.  
\end{align} 
If the starting steps are approximated sufficiently accurate, i.e., 
\begin{align} \label{k-step-assump}
\max_{0\le j\le k}\|e_j\|\le \frac12
\end{align} then there exists a positive constant $\tau_1$ such that for $m\ge \frac32 c_0\,\ln(1/\tau)$ (thus $e^{-K/c_0}\le \tau^{\frac32}$) and $\tau\le\tau_1$ there holds 
\begin{align} \label{mathind-cl}
\|e_m\|
&\le 
1.
\end{align} 
This completes the mathematical induction from \eqref{mathind-3} to \eqref{mathind-cl}, provided that \eqref{k-step-assump} holds. Then \eqref{alb} holds for $m=N$.

Since the starting $k$ steps are computed by the exponential Euler method, which is the special case $k=1$ of the analysis above. Therefore, the analysis above also implies 
\begin{align} \label{alb2}
\max_{0\le j\le k}\|e_j\| 
\le C\tau^k + Ce^{-K/c_0} . 
\end{align} 
This verifies \eqref{k-step-assump} for sufficiently small stepsize $\tau$ and sufficiently large $m$, say $\tau\le \tau_2$ and $m\ge m_2$. 
Then substituting \eqref{alb2} into \eqref{alb} yields 
\begin{align} \label{alb3}
\max_{k+1\le n\le N}\|e_n\|
&\le 
C\tau^k + C\tau^{-1}e^{-K/c_0} . 
\end{align} 

This completes the proof of Theorem \ref{MainTHM} under the stepsize condition $\tau\le \tau_0=\min(\tau_1,\tau_2)$ and $K\ge \frac32 c_0\ln(1/\tau)$.
\hfill\endproof

\section{Numerical example}

In this section, we present a numerical example to support our theoretical analysis and illustrate the convergence of the proposed time stepping method. 
Since the proposed numerical method is only for time discretization, which is independent of the spatial regularity of solution, we shall present a one-dimensional example with sufficiently accurate spatial discretization in order to observe the error and order of convergence of the time discretization method. 

We consider the nonlinear parabolic equation 
\begin{equation}
\label{ivp-Ex2}
\left \{
\begin{aligned}
\partial_tu(x,t) - \partial_{xx}u(x,t)&=u(x,t)-u^3(x,t) &&\mbox{for}\,\,\, (x,t)\in\Omega\times (0,T] ,\\
u(x,t)&=0 &&\mbox{for}\,\,\, (x,t)\in \partial\Omega\times (0,T], \\
u(x,0)&=u_0(x) &&\mbox{for}\,\,\, x\in \Omega,
\end{aligned}
\right .
\end{equation}
in a domain $\Omega \times (0,T)$ 
, with a discontinuous initial condition 
\begin{align}\label{ivp-Ex-u0}
u_0(x)=
\left\{\begin{aligned}
&0 &&x\in(0,0.5]\\
&1 &&x\in(0.5,1).  
\end{aligned}\right.
\end{align}
The function $f(u)=u-u^3$ is a smooth function of $u$ and therefore satisfying the assumptions \eqref{Lipschitz}--\eqref{df}, as mentioned in the example of semilinear parabolic equation \eqref{example-u-f}. 

The problem \eqref{ivp-Ex2} has a unique solution 
$$u\in C([0,T];L^p(\Omega))\cap C((0,T];C_0(\overline\Omega)),
\quad\mbox{with}\quad
u\notin C([0,T];L^\infty(\Omega)). $$
Therefore, $X=L^\infty(\Omega)$ does not fit the abstract problem directly. Nevertheless, the smoothing property of the heat semigroup guarantees that $u(\cdot,t)\in C_0(\overline\Omega)$ for arbitrarily small $t>0$ and therefore $X=C_0(\overline\Omega)$ would fit the abstract problem if we replace the initial time $t=0$ by an infinitesimal positive time. Therefore, Theorem \ref{MainTHM} implies that the numerical solution given by \eqref{method}-\eqref{exp-Euler} has an error bound of 
$$
\|u_n-u(t_n)\|_{C_0(\overline\Omega)} \le C\tau^k  
$$
for sufficiently large $K=O(\ln(1/\tau))$. 

We solve \eqref{ivp-Ex2} by the method \eqref{method}-\eqref{exp-Euler} with $\beta=\frac34$ for $k=2$ 
and $k=3$, respectively, using $\alpha= \frac{\pi}{4}$ and $K=10\,\log(1/\tau)$ quadrature nodes, and investigate the time discretization errors of the proposed time stepping method for several different $T$. 
The spatial discretization is done by using the standard finite difference method with a sufficiently small mesh size $2^{-10}$ so that further decreasing spatial mesh size has negligible influence in observing the order of convergence in time. 
The errors of numerical solutions between two consecutive stepszies are presented in Tables \ref{Table2} and \ref{Table3}, where the orders of convergence are computed by the formula
$$
\mbox{order of convergence} = 
\log\Bigg(\frac{\|u_N^{(\tau)}-u_N^{(\tau/2)}\|_{C_0(\overline\Omega)}}{\|u_N^{(\tau/2)}-u_N^{(\tau/4)}\|_{C_0(\overline\Omega)}}\Bigg)/\log(2) 
$$
based on the finest three meshes. 
The orders of convergence observed in these numerical tests are $O(\tau^k)$, which is consistent with the theoretical result proved in Theorem \ref{MainTHM}. 
 
%
%

\begin{table}[!htbp]
\centering
\caption{Numerical results of $\|u_N^{(\tau)}-u_N^{(\tau/2)}\|_{C_0(\overline\Omega)}$ for $k=2$.}
\begin{center}
\begin{tabular}{|l|c|c|c|c|c|}\hline 
& $T=1/2$ & $T=1/4$ & $T=1/8$ & $T=1/16$  \\ \hline
$\tau=$1/64    
&\!2.934$\times 10^{-6}$  
&\!3.935$\times 10^{-6}$ 
&\!1.286$\times 10^{-6}$
&\!\!1.864$\times 10^{-6}$ \\ \hline
$\tau=$1/128   
&\!7.410$\times 10^{-7}$ 
&\!9.351$\times 10^{-7}$ 
&\!3.053$\times 10^{-7}$ 
&\!\!7.297$\times 10^{-7}$ \\ \hline
$\tau=$1/256 
&\!1.779$\times 10^{-7}$ 
&\!2.269$\times 10^{-7}$ 
&\!7.735$\times 10^{-8}$ 
&\!\!1.838$\times 10^{-7}$\\ \hline
$\,$Order of convergence 
&$O(\tau^{2.1})$ & $O(\tau^{2.0})$ 
& $O(\tau^{2.0})$ 
& $O(\tau^{2.0})$  \\ \hline
\end{tabular}
\end{center}\bigskip
\label{Table2}
\centering
\caption{Numerical results of $\|u_N^{(\tau)}-u_N^{(\tau/2)}\|_{C_0(\overline\Omega)}$ for $k=3$.}\vspace{-10pt}
\begin{center}
\begin{tabular}{|l|c|c|c|c|c|}\hline 
& $T=1/2$ & $T=1/4$ & $T=1/8$ & $T=1/16$  \\ \hline
$\tau=$1/64    
&\!\!2.082$\times 10^{-7}$  
&\!\!5.807$\times 10^{-8}$\!
&\!\!2.945$\times 10^{-7}$
&\!\!3.425$\times 10^{-7}$ \\ \hline
$\tau=$1/128   
&\!\!2.614$\times 10^{-8}$ 
&\!\!7.756$\times 10^{-9}$\! 
&\!\!3.188$\times 10^{-8}$ 
&\!\!4.129$\times 10^{-8}$ \\ \hline
$\tau=$1/256 
&\!\!2.928$\times 10^{-9}$ 
&9.988$\times 10^{-10}$\!
&\!\!3.982$\times 10^{-9}$ 
&\!\!5.064$\times 10^{-9}$\\ \hline
$\,$Order of convergence 
&$O(\tau^{3.1})$ & $O(\tau^{3.0})$ 
& $O(\tau^{3.0})$ 
& $O(\tau^{3.0})$  \\ \hline
\end{tabular}
\end{center}
\label{Table3}
\end{table}

For comparison with the exponential integrator, we also present in Table \ref{Table_methods} the numerical results for the Crank--Nicolson method, 2-stage Gauss Runge--Kutta method and 2-stage Radau Runge--Kutta method for \eqref{ivp-Ex2}, 
with uniform stepsize $\tau=T/N$. The numerical results in Table \ref{Table_methods} show that the standard Crank--Nicolson method and Gauss Runge--Kutta method cannot yield any convergence rates. Indeed, these two methods do not satisfy the condition $|r(\infty)|<1$ in \cite[Theorem 1]{Crouzeix-Thomee-1987} when proving \eqref{example-u-f}. This shows the necessary of this condition in solving problems with nonsmooth initial data. The numerical results in Table \ref{Table_methods} also show that the 2-stage Radau Runge--Kutta method has roughly first-order convergence, instead of the optimal third-order convergence, for nonsmooth initial data. 

\begin{table}[!htbp]
\centering
\caption{Numerical results of $\|u_N^{(\tau)}-u_N^{(\tau/2)}\|_{C_0(\overline\Omega)}$ at $T=1/2$ (with $h = 2^{-14}$).}
\begin{center}
\begin{tabular}{|l|c|c|c|c|c|}\hline 
& $\tau=$1/256 & $\tau=$1/512 & $\tau=$1/1024 & Order of convergence \\\hline 
Crank--Nicolson    
&\!1.465$\times 10^{-1}$  
&\!1.466$\times 10^{-1}$
&\!1.466$\times 10^{-1}$ 
&$O(\tau^{0.0})$ \\ \hline
Gauss Runge--Kutta  
(2 stages)
&\!2.930$\times 10^{-1}$  
&\!2.930$\times 10^{-1}$ 
&\!2.933$\times 10^{-1}$ 
&$O(\tau^{0.0})$ \\ \hline
Radau Runge--Kutta  
(2 stages)
&\!2.215$\times 10^{-8}$  
&\!1.085$\times 10^{-8}$ 
&\!4.022$\times 10^{-9}$
&$O(\tau^{1.2})$\\ \hline
\end{tabular}
\end{center}\bigskip
\label{Table_methods}
\end{table}

\section{Conclusion}
We have proved that a variable stepsize exponential multistep integrator, with contour integral approximation of the operator-valued exponential functions, can produce high-order accurate numerical solutions for a semilinear parabolic equation with nonsmooth initial data (with no differentiability at all). 
The numerical example also supports this theoretical result. Both the regularity analysis and the error analysis in this paper can be similarly extended to semilinear parabolic equations with time-dependent coefficients. However, the extension to quasilinear parabolic equations with nonsmooth initial data is not trivial. 

The proposed method in this paper is essentially the multistep ETD with variable stepsize and contour integral approximation to the exponential operator. We have proved the first high-order convergence result in approximating semilinear parabolic equations with nonsmooth initial data (without any regularity in addition to $u_0\in X$).
For smooth initial data the exponential time differencing Runge--Kutta (ETD--RK) method would have the same complexity as the proposed multistep exponential integrator in this paper, both requiring to solve the equation for $O(\tau^{-1})$ time levels to achieve the accuracy of $O(\tau^k)$. However, since high-order accuracy of ETD--RK method has not been proved for nonsmooth initial data, the computational complexity of ETD--RK to achieve the accuracy of $O(\tau^k)$ is still unknown in this case. We believe the techniques of this paper may also be adapted to ETD--RK to yield high-order convergence for nonsmooth initial data.

\bibliographystyle{abbrv}
\bibliography{exp_integrator}

\end{document}